\theoremstyle{plain}
\newtheorem{theorem}{Theorem}[section]
\newtheorem{lemma}[theorem]{Lemma}
\newtheorem{proposition}[theorem]{Proposition}
\newtheorem{corollary}[theorem]{Corollary}
\newtheorem{question}[theorem]{Question}
\theoremstyle{definition}
\newtheorem{definition}[theorem]{Definition}
\newtheorem{convention}[theorem]{Convention}
\newtheorem{remark}[theorem]{Remark}
\theoremstyle{remark}
\newcommand{\Z}{\mathbb Z}
\newcommand{\Q}{\mathbb Q}
\newcommand{\C}{\mathbb C}
\newcommand{\R}{\mathbb R}
\newcommand{\V}{\mathbb V}
\newcommand{\p}{\mathbb P}
\newcommand{\s}{\mathbb S}
\newcommand{\mult}{\mathbb G}
\newcommand{\G}{\mathbf G}
\newcommand{\mon}{\mathbf H}
\newcommand{\M}{\mathbf M}
\newcommand{\fix}{\mathbf{Fix}}
\newcommand{\stab}{\mathbf{Stab}}
\newcommand{\hl}{\mathrm{HL}}
\newcommand{\HLM}{\mathrm{HL}(S, \V^\otimes, \M)}
\newcommand{\e}{\ast}
\newcommand{\im}{\mathrm{im} \hspace{0.05cm}}
\newcommand{\an}{\mathrm{an}}
\newcommand{\Hom}{\mathrm{Hom}}
\newcommand{\quo}{\backslash}
\newcommand{\gl}{\mathbf{GL}}
\newcommand{\res}{\mathrm{Res}}
\newcommand{\g}{\mathfrak{g}}
\newcommand{\ph}{\varphi}
\newcommand{\mt}{\mathfrak{m}}
\newcommand{\abel}{\mathfrak{a}}
\newcommand{\ad}{\mathrm{ad}}
\newcommand{\typ}{\mathrm{typ}}
\newcommand{\der}{\mathrm{der}}
\newcommand{\trans}{\mathrm{trans}}
\newcommand{\ct}{\mathrm{CT}}
\newcommand{\prim}{\mathrm{prim}}
\newcommand{\End}{\mathrm{End}}
\title{Global variations of Hodge structures of maximal dimension}
\author{Nazim Khelifa}
\date{\today}
\begin{document}
\maketitle
\begin{abstract}
We derive a new bound on the dimension of images of period maps of global pure polarized integral variations of Hodge structures with generic Hodge datum of level at least $3$. When the generic Mumford-Tate domain of the variation is a period domain parametrizing Hodge structures with given Hodge numbers, we prove that the new bound is at worst linear in the Hodge numbers, while previous known bounds were quadratic. We also give an example where our bound is significantly better than previous ones and sharp in the sense that there is a variation of geometric origin whose period image has maximal dimension (i.e. equal to the new bound).
\end{abstract}
\begin{center}
\tableofcontents
\end{center}
\section{Introduction}
\subsection{Context}
Let $\V$ be a polarized integral variation of pure Hodge structures on a smooth, irreducible and quasi-projective complex algebraic variety $S$. Denote by $(\G,D)$ its generic Hodge datum, by $\Gamma \subset \G(\Q)$ an arithmetic lattice containing the image of the monodromy representation associated to $\V$, and by $\ph : S^\an \rightarrow \Gamma \quo D$ the resulting period map. Griffiths transversality imposes differential-geometric constraints on the image of $\ph$, namely it must be tangent to a non-trivial distribution of $\Gamma \quo D$ which is non-integrable in general. As discussed in \cite{carltolab2}, these constraints can be translated into bounds on the dimension of the image of $\ph$. Recall that any point $x \in D$ seen as a morphism of real algebraic groups $x : \s := \res_{\C/\R}(\mult_{m,\C}) \rightarrow \G_\R$ induces, by post-composition with the adjoint representation, a $\Q$-Hodge structure of weight zero on the Lie algebra $\g$ of $\G$ whose Hodge decomposition will be denoted by
\[
\g_\C := \g \otimes_\Q \C = \bigoplus_{k \in \Z} \g_x^{-k,k}.
\] 
The corresponding real Hodge structure on $\g_\R$ doesn't depend on the choice of $x\in D$ up to isomorphism. The result of Carlson and Toledo \cite[Prop 5.3]{carltolab2} is now that
\[
\dim \ph(S^\an) \leqslant m_\ct(\G,D)
\]
where $m_{\ct}(\G,D) := \max\{\dim_\C \abel \hspace{0.1cm} : \hspace{0.1cm} \abel \subset \g^{-1,1}, [\abel, \abel] = 0\}$. We will refer to this bound as the \textit{Carlson-Toledo bound}.

When $D$ is a period domain, the Carlson-Toledo bound can be computed explicitly and is quadratic in the Hodge numbers \cite[(1.5)]{ckt}. The question of the sharpness of this bound attracted much interest in the past (see. \cite{carlson}, \cite{ckt}, \cite{carlsimp}, \cite{carltolab3}, \cite{mayer}). These works have two essential limitations from an algebro-geometric perspective:
\begin{enumerate}
\item Pure polarized integral variations of Hodge structures that arise as local systems of primitive cohomology groups of fibers of families of projective complex varieties usually have a base which is a (quasi-)projective algebraic variety, and their global nature is widely known to constrain their behaviour. On the other hand, all the aforementionned works are infinitesimal in nature, and therefore bound the dimension of germs of variations of Hodge structures forgetting their global nature. Therefore one could hope that better dimension bounds exist when one takes into account global properties of the variation.
\item Almost all known examples of global variations which are maximal i.e. have period image of dimension equal to the Carlson-Toledo bound are horizontal totally geodesic subvarieties of $\Gamma \quo D$ arising as arithmetic quotients of hermitian symmetric subdomains of $D$, which is not fully satisfying as one would like to call a variation "maximal" when its period image has dimension equal to the Carlson-Toledo bound of its generic Hodge datum. To paraphrase Carlson and Toledo \cite{nongeodesic}, such examples "are defined by Lie theory rather than by algebraic geometry". 
\end{enumerate}
The purpose of this work is to give a general method to improve existing bounds by taking into account the above two remarks. Namely we adress the following:
\begin{question}\label{mainq}
Let $(\G,D)$ be a Hodge datum.
\begin{itemize}
\item Can one improve the bound $m_{\ct}(\G,D)$ by taking into account the assumptions of quasi-projectivity of the base and of Hodge genericity?
\item Can one exhibit a pure polarized integral variation of Hodge structures on a quasi-projective base (and not only a germ of variation) with generic Hodge datum $(\G,D)$, whose period image dimension attains this bound?
\end{itemize}
\end{question}

The answer naturally depends on the level of the Hodge datum, in the following sense introduced in \cite{bku}:
\begin{definition}
Let $(\G,D)$ be a Hodge datum such that the derived subgroup $\G^\der$ of $\G$ is $\Q$-simple. Denote by $\g$ its Lie algebra, fix $x \in D$ and denote by 
\[
\g\otimes_\Q \C = \bigoplus_{k \in \Z} \g_x^{-k,k}
\] 
the induced Hodge decomposition. The \textit{level} of $(\G,D)$ is the largest integer $k$ such that $\g_x^{-k,k} \neq \{0\}$. It doesn't depend on the choice of $x$.
\end{definition}

The answer is obviously positive when $(\G,D)$ is a Shimura datum i.e. has level $1$, as Griffiths transversality imposes no condition there, and connected Shimura varieties are quasi-projective algebraic varieties. In \cite{nongeodesic}, Carlson and Toledo exhibit a variation of hypersurfaces in a weighted projective space (hence of geometric origin) whose generic Hodge datum has level $2$ and whose period image has dimension equal to the Carlson-Toledo bound of its generic Hodge datum. 

\subsection{Bounds in higher level} The purpose of this note is to explain how the Zilber-Pink paradigm developped in \cite{bku} can be used to answer by the positive to the first part of Question \ref{mainq} when the level of the generic Hodge datum is at least $3$. Let us start with the:
\begin{definition}
Let $(\G,D)$ be a Hodge datum. The \textit{Hodge locus bound} associated to $(\G,D)$ is defined as
\[
m_\hl(\G,D) = \inf_{(\M,D_M) \subsetneq (\G,D)} \Big[ \dim D - \dim D_M \Big] - 1
\]
where the infimum is taken over all strict Hodge sub-data of $(\G,D)$. In particular $m_\hl(\G,D) = + \infty$ if $(\G,D)$ doesn't contain any strict Hodge sub-datum.
\end{definition}
Our general result reads as follows:
\begin{theorem}\label{main}
Let $(\G,D)$ be a Hodge datum of level at least $3$ such that $\G^\der$ is $\Q$-simple. Let $\V$ be a pure polarized integral variation of Hodge structures, with generic Hodge datum $(\G,D)$, on a smooth, irreducible and quasi-projective complex algebraic variety $S$. Let $\Gamma \subset \G(\Q)$ be an arithmetic lattice containing the image of the monodromy representation associated to $\V$ and $\ph : S^\an \rightarrow \Gamma \quo D$ be the corresponding period map. Then,
\[
\dim \ph(S^\an) \leqslant m_\hl(\G,D).
\]
\end{theorem}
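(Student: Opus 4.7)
The plan is to argue by contradiction using the density theorem for typical Hodge loci established in \cite{bku}. Suppose for contradiction that $\dim \ph(S^\an) > m_\hl(\G,D)$. The bound is vacuous unless $(\G,D)$ admits at least one strict Hodge sub-datum, so I assume this is the case; as $\dim D - \dim D_M - 1$ is a non-negative integer for each such $(\M, D_M) \subsetneq (\G,D)$, the infimum defining $m_\hl(\G,D)$ is attained, by some $(\M, D_M)$. The assumption thus reads
\[
\dim \ph(S^\an) \geq \dim D - \dim D_M,
\]
which, since horizontal codimensions are bounded above by full codimensions, in turn ensures that the $(\M, D_M)$-typical Hodge locus of $\V$ in $S$ has non-negative expected dimension in the sense of the Zilber-Pink paradigm.

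The key step is then to invoke the density theorem of \cite{bku}: under our standing hypotheses that $(\G,D)$ has level at least $3$, $\G^\der$ is $\Q$-simple, and $S$ is smooth, irreducible and quasi-projective, this theorem guarantees that whenever the $(\M, D_M)$-typical Hodge locus has non-negative expected dimension, it is Zariski dense in $S$. This is the heart of the argument, and it is precisely here that the global nature of $\V$ and the Zilber-Pink paradigm intervene, allowing us to go beyond what infinitesimal Carlson-Toledo type arguments can reach.

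The final step is to derive a contradiction from the density conclusion: Zariski density of the $(\M, D_M)$-typical Hodge locus forces the generic Mumford-Tate group of $\V$ to be contained in some $\Gamma$-conjugate $\gamma \M \gamma^{-1} \subsetneq \G$, contradicting the assumption that $(\G,D)$ is the generic Hodge datum of $\V$.

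The main obstacle lies in the precise invocation of the density theorem from \cite{bku}: matching the horizontal codimensions that naturally appear there to the full Mumford-Tate domain dimensions used in the definition of $m_\hl(\G,D)$, and handling the borderline case in which the typical components of the Hodge locus are zero-dimensional. Once these points are addressed, the theorem follows from \cite{bku} combined with a standard genericity argument exploiting the countability of the Hecke orbit of $\M$.
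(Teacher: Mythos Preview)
Your argument reverses the roles of the two key inputs. The contribution of \cite{bku} relevant here is not a density theorem but an \emph{emptiness} theorem: in level at least $3$ with $\G^\der$ $\Q$-simple, the typical (indeed transverse) Hodge locus attached to any strict sub-datum is empty (this is Proposition~\ref{nonexistence}, a mild strengthening of \cite[Thm.~3.3]{bku}). The density input --- that $\V$-admissibility of $(\M,D_M)$ forces the associated Hodge locus to be non-empty, in fact analytically dense --- comes from \cite{es} and \cite{ku} and holds in every level; level $\geq 3$ plays no role there. The paper combines these in the opposite direction to yours: admissibility of $(\M,D_M)$ would give non-empty transverse loci by Proposition~\ref{existence}, contradicting Proposition~\ref{nonexistence}; hence no strict sub-datum is $\V$-admissible, and the bound follows immediately.

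Your final contradiction also fails on its own terms. Zariski density of $\HLM_\typ$ in $S$ does not force the generic Mumford-Tate group of $\V$ into a conjugate of $\M$: each component is a proper closed subvariety, and a countable union of proper closed subvarieties can be Zariski dense while the generic point remains Hodge generic --- this is exactly what happens for Noether--Lefschetz loci in level $1$ or $2$. Finally, the obstacle you flag at the end (expected-dimension-zero intersections that might fail to be typical because their Mumford-Tate group drops further) is genuine and is precisely why the paper replaces ``typical'' by the weaker notion of \emph{transverse} special subvarieties, for which both Proposition~\ref{existence} and Proposition~\ref{nonexistence} hold without that caveat; see the Remark following the proof of Theorem~\ref{main}.
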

For this result to be useful in studying the first part of Question \ref{mainq} for some Hodge datum $(\G,D)$ one has to compare $m_\ct(\G,D)$ to $m_\hl(\G,D)$, which can be done by estimating explicitly $m_\hl(\G,D)$ in some cases. We do it here for Hodge data such that $D$ is a period domain, to which we refer as \textit{period data} (see Section \ref{perdat} for details, especially on how to see the level of a period datum on its associated Hodge numbers).

In \cite[(1.5)]{ckt}, the Carlson-Toledo bound is computed explicitly for period data in terms of the Hodge numbers of the structures that they parametrize. We refer to \textit{op. cit.} for the precise formula giving $m_\ct(\G,D)$, but we emphasize at this point on the fact that it is given by a quadratic function of the Hodge numbers. We prove that for period data the "global" Hodge locus bound is at worst linear in the Hodge numbers, hence quickly better than the "local" Carlson-Toledo bound as the Hodge numbers grow:
\begin{theorem}\label{computation}
Let $(\G,D)$ be a period datum with associated Hodge numbers $(h^{p,q})_{p+q = w}$ such that $h^{w,0} \neq 0$ and $h^{p,q} = 0$ for $p<0$ or $q<0$. Let $\V$ be a pure polarized integral variation of Hodge structures, with generic Hodge datum $(\G,D)$, on a smooth, irreducible and quasi-projective complex algebraic variety $S$. Let $\Gamma \subset \G(\Q)$ be an arithmetic lattice containing the image of the monodromy representation associated to $\V$ and $\ph : S^\an \rightarrow \Gamma \quo D$ be the corresponding period map.
\begin{itemize}
\item[$(a)$] If $(\G,D)$ parametrizes Hodge structures of even weight $w = 2n$ with $n \geqslant 2$ and $h^{n,n}\neq 0$, one has
\[
\dim \ph(S^\an) \leqslant \Big(\sum_{i = 1}^n h^{n-i,n+i}\Big) - 1.
\]
\item[$(b)$] If $(\G,D)$ parametrizes Hodge structures of odd weight $w = 2n+1$ with $n \geqslant 1$. For any non-zero Hodge number $h^{r,s}$ one has
\[
\dim \ph(S^\an) \leqslant \Big(2 \sum_{i = 0}^n h^{n-i, n+i+1}\Big) - 2 - h^{r,s}.
\]
\end{itemize}
\end{theorem}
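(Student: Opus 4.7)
The plan is to deduce both inequalities from Theorem \ref{main}. The standing hypotheses ($h^{w,0} \neq 0$ and $h^{p,q} = 0$ for $p<0$ or $q<0$) force the level of $(\G, D)$ to equal $w$, which is $2n \geqslant 4$ in case $(a)$ and $2n+1 \geqslant 3$ in case $(b)$, so Theorem \ref{main} applies. It therefore suffices to exhibit, in each case, a strict Hodge sub-datum $(\M, D_M) \subsetneq (\G, D)$ whose codimension matches the claimed bound, and then invoke the definition of $m_\hl(\G,D)$.

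For part $(a)$, the assumption $h^{n,n} \neq 0$ allows me to choose a rational Hodge class $v \in V_\Q$ of type $(n,n)$ at some $x \in D$ with $Q(v,v) \neq 0$, and set $\M := \stab_\G(v)$. The orbit map $X \mapsto Xv$ identifies $\g / \mt$ with $v^\perp \subset V$: surjectivity is seen by exhibiting, for each $u \in v^\perp$, the skew-symmetric operator $w \mapsto Q(w,v)u - Q(w,u)v$, which sends $v$ to $Q(v,v)\cdot u$. As this map sends $\g^{-k,k}$ onto $V^{n-k,n+k}$ for each $k \geqslant 1$, summing gives
\[
\dim D - \dim D_M \;=\; \sum_{k=1}^n h^{n-k,\,n+k},
\]
and the bound in $(a)$ follows.

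For part $(b)$, since there are no Hodge classes in odd weight, I would instead take, given a non-zero $h^{r,s}$ (WLOG with $r > s$), a rank-$2$ symplectic $\Q$-subspace $V' \subset V$ carrying the unique polarized weight-$w$ Hodge structure with $h'^{r,s} = h'^{s,r} = 1$, embedded as a sub-Hodge structure at some point of $D$ (possible because $h^{r,s} \neq 0$). Setting $V'' := V'^\perp$ and $\M := \ssp(V') \times \ssp(V'')$ yields a strict Hodge sub-datum. Since the symplectic form forces $X|_{V''}$ to be determined by $X|_{V'}$, one has $\g/\mt \cong \Hom(V', V'')$, whose graded piece of type $(-k, k)$ splits as $V''^{r-k,\,s+k} \oplus V''^{s-k,\,r+k}$ (with $h''^{p,q} = h^{p,q}$ except at $(r,s)$ and $(s,r)$ where it drops by one). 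A direct manipulation using the symmetry $h^{p,w-p} = h^{w-p,p}$ then reduces the sum over $k \geqslant 1$ to $2N - h^{r,s} - 1$ with $N := \sum_{i=0}^n h^{n-i, n+i+1}$, and the bound in $(b)$ follows.

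The main obstacle is not the dimension count itself but checking that the proposed $(\M, D_M)$ genuinely defines a Hodge sub-datum: one must ensure $\M$ is a reductive $\Q$-subgroup realized as the generic Mumford-Tate group along $D_M$, that $D_M$ is non-empty (which depends on the signature of $Q$ restricted to $\Q v$, respectively to $V'$), and that the inclusion is strict. All three reduce to standard facts about polarized sub-Hodge structures of period domains.
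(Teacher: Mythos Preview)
Your construction of the sub-data and the codimension computations are essentially those of the paper (the paper computes $\dim D - \dim D_M$ via explicit real Lie group dimensions rather than via the Hodge grading on $\g/\mt$, but the two are equivalent). There is, however, a genuine gap in the verification of the hypotheses of Theorem~\ref{main}.

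Your assertion that ``$h^{w,0}\neq 0$ and $h^{p,q}=0$ for $p<0$ or $q<0$ force the level of $(\G,D)$ to equal $w$'' is false in the orthogonal case. By Proposition~\ref{infhodgenum} one has $h_{\inf}^{2n}=\tfrac12 h^{2n,0}(h^{2n,0}-1)$, which vanishes when $h^{2n,0}=1$; for instance with $w=4$, $h^{4,0}=1$, $h^{3,1}=0$, $h^{2,2}>0$ the level is exactly $2$, yet this tuple satisfies all hypotheses of part~(a). You also omit checking that $\G^{\der}$ is $\Q$-simple, which Theorem~\ref{main} requires: $\mathbf{SO}(V,\psi)$ fails to be $\Q$-simple when $\dim_\Q V=4$. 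The paper handles both issues simultaneously by first disposing of the case $\dim\ph(S^\an)=0$ (where the bound is trivial), and then observing that $\dim\ph(S^\an)>0$ forces $h_{\inf}^1>0$ via Griffiths transversality; from the explicit formula for $h_{\inf}^1$ one extracts enough non-zero Hodge numbers to conclude both $\dim_\Q V\geqslant 5$ and, via Corollary~\ref{levelcrit}, level $\geqslant 3$. In the symplectic case your level claim is correct (since $h_{\inf}^{2n+1}=\tfrac12 h^{2n+1,0}(h^{2n+1,0}+1)>0$), and $\Q$-simplicity of $\mathbf{Sp}(V,\psi)$ is automatic, so part~(b) goes through as you wrote it.

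Two smaller points. First, the existence of a \emph{rational} vector $v\in V^{n,n}_x$ for some $x\in D$ (and likewise of a rational rank-$2$ sub-Hodge structure in~(b)) is not immediate from $h^{n,n}\neq 0$; the paper obtains it by picking any real vector in $V^{n,n}_{x_0}$ and using transitivity of $\G(\R)$ on $V_\R\smallsetminus\{0\}$ (resp.\ on symplectic bases) to translate to a rational one. Second, for $(\M,D_M)$ to be a Hodge sub-datum you do not need $\M$ to be the generic Mumford--Tate group along $D_M$; it suffices that some $x\in D$ factor through $\M_\R$ (this is Lemma~\ref{crit}), which is exactly what your choice of $v$ (resp.\ $V'$) guarantees.
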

\subsection{Sharpness} The new bounds exhibited above give an efficient way of answering positively to the first part of Question \ref{mainq} for a given period datum of level at least $3$. It suffices to compute the Hodge locus bound using Theorem \ref{computation} and to compare it to the Carlson-Toledo bound using \cite[(1.5)]{ckt}. We exhibit here a period datum for which the Hodge locus bound is sharp and better than the Carlson-Toledo bound.

Let $f_4 : \mathcal{X}_4 \rightarrow U_4$ be the universal family of non-singular sextic fourfolds in $\p^5(\C)$ and $\V_4$ be the associated variation on primitive middle cohomology (see Section \ref{sharp} for details). Let $(\G_{4},D_{4})$ be its generic Hodge datum. By \cite[Cor. 5.5]{weil} it is a period datum, and it parametrizes weight $4$ polarized $\Q$-Hodge structures with Hodge numbers $h^{2,2} = 1755$, $h^{3,1} = 426$ and $h^{4,0} = 1$. One has
\[
m_\hl(\G_{4}, D_{4}) \leqslant h^{3,1} + h^{4,0} - 1 = 426,
\]
and
\[
m_\ct(\G_{4}, D_{4}) = \max \{h^{3,1}h^{4,0}, \frac{1}{2} h^{3,1}h^{2,2} \} = \max \{426, 373815\} = 373815.
\]
In particular, $m_\hl(\G_{4}, D_{4}) < m_\ct(\G_{4}, D_{4})$. Furthermore, one has the following which shows that the answer to the full Question \ref{mainq} is positive for $(\G_4,D_4)$.
\begin{proposition}\label{sharpfourfolds}
The period map $\ph_4$ associated to $\V_4$ has image of dimension
\[
\dim \ph_4(U_4^\an) = 426 = m_\hl(\G_{4},D_{4}).
\]
In particular, it is maximal for the dimension among pure polarized integral variations of Hodge structures over a smooth, irreducible and quasi-projective base which have generic Hodge datum $(\G_4,D_4)$.
\end{proposition}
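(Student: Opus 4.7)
The plan is to prove both inequalities $\dim \ph_4(U_4^\an) \leqslant 426$ and $\dim \ph_4(U_4^\an) \geqslant 426$; the final assertion of the proposition then follows directly from Theorem \ref{main}.

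The upper bound is already recorded in the text preceding the statement: Theorem \ref{main} combined with Theorem \ref{computation}$(a)$ applied to $(\G_4, D_4)$ with $w=4$, $n=2$ yields $\dim \ph_4(U_4^\an) \leqslant m_{\hl}(\G_4, D_4) \leqslant h^{3,1} + h^{4,0} - 1 = 426$.

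For the lower bound I would rely on infinitesimal Torelli for smooth sextic fourfolds. The base $U_4$ is the complement of the discriminant in $\p(H^0(\p^5, \hol(6)))$, of dimension $\binom{11}{5} - 1 = 461$. The algebraic group $\mathrm{PGL}_6$ (of dimension $35$) acts on $U_4$, and since a generic smooth sextic fourfold has no nontrivial automorphism, generic orbits are $35$-dimensional. Because isomorphic fourfolds yield isomorphic polarized integral Hodge structures, $\ph_4$ is $\mathrm{PGL}_6$-invariant. It therefore suffices to show that at a generic $[F] \in U_4$ the kernel of $d\ph_4([F])$ coincides with the tangent space to the $\mathrm{PGL}_6$-orbit of $[F]$; the rank theorem then yields $\dim \ph_4(U_4^\an) = 461 - 35 = 426$.

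Under Griffiths' description of periods of smooth hypersurfaces via the Jacobian ring $R(F) = \C[x_0,\ldots,x_5]/J(F)$, one has canonical identifications $H^1(X_F, T_{X_F}) \simeq R(F)_6$, $H^{4,0}_{\prim}(X_F) \simeq R(F)_0 = \C$ and $H^{3,1}_{\prim}(X_F) \simeq R(F)_6$, under which the component of the descended differential $d\ph_4([F])$ valued in $\Hom(H^{4,0}_{\prim}(X_F), H^{3,1}_{\prim}(X_F))$ is simply the identity on $R(F)_6$, arising from the multiplication pairing $R(F)_6 \otimes R(F)_0 \to R(F)_6$. This component alone is injective, which proves infinitesimal Torelli modulo the $\mathrm{PGL}_6$-action and hence the desired lower bound. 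The only real care needed is the standard verification that $(\partial F/\partial x_i)_{i=0}^5$ is a regular sequence at a generic smooth $F$, which is what underlies the Jacobian ring description; this is the main, and rather mild, technical point of the argument.
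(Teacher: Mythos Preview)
Your proposal is correct and follows essentially the same route as the paper: both establish $\dim \ph_4(U_4^\an) = 426$ by showing that the component of the infinitesimal period map landing in $\Hom(H^{4,0}_\prim, H^{3,1}_\prim)$ is already injective on $H^1(X, T_X)$, which is exactly the Calabi--Yau observation that contraction with a generator $\Omega \in H^0(X, K_X)$ identifies $H^1(X, T_X)$ with $H^{3,1}$. The only difference is packaging: you phrase this via the Jacobian ring and the $\mathrm{PGL}_6$-action (so that $461 - 35 = 426$), while the paper phrases it via surjectivity of the Kodaira--Spencer map and the isomorphism $T_X \cong \Omega_X^{n-1}$ (so that $\dim H^1(X, T_X) = h^{3,1} = 426$).
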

Let us compare this result with the general result of Carlson-Donagi in \cite{carldon} which applies to the above variation of hypersurfaces. It states that for a non-singular sextic fourfold $X$, the germ of the hypersurface variation at $X$ in $D_4$ is \textit{maximal for the inclusion} among germs of polarized variations of Hodge structures in $D_4$. However, it doesn't ensure that it is \textit{maximal for the dimension} among germs, and in fact it is not, as using the methods of \cite{ckt} one can construct a germ of dimension $373815$ as a sum of appropriate root subspaces of the complexified Lie algebra of $\G_4$. Our result is in some sense complementary to theirs, as it states that as a pure polarized integral variation of Hodge structures with generic Hodge datum $(\G_4,D_4)$ on a quasi-projective base, it is maximal for the dimension. In particular, it implies that the aforementionned germs of variations of bigger dimension don't integrate to Hodge generic variations on a quasi-projective base.
\subsection{Notations} Throughout the text, we will use the following notations:
\begin{itemize}
\item \textit{Algebraic groups :} We will denote $\Q$-algebraic groups in bold letters (e.g. $\G$,$\M$,$\mon$) and for a reductive $\Q$-group $\G$, we will denote by $\G^\der$ its derived subgroup and $\G^\ad$ the adjoint group of $\G$. We will use roman letters for real Lie groups. The multiplicative group over a field $k$ will be denoted $\mult_{m,k}$. For a rational representation of a $\Q$-algebraic group $\G \rightarrow \gl(V)$, and for a vector $v \in V$, we will denote by $\fix_\G(v)$ the fixator in $\G$ of $v$ and $\stab_\G(v)$ the stabilizer in $\G$ of the line $\Q v$. 
\item \textit{Scalar extensions :} If $V$ is a $\Q$-vector space (resp. $\G$ is a $\Q$-algebraic group) and $\Q \subset k \subset \C$ is a field, we will denote by $V_k$ (resp. $\G_k$) the $k$-vector space (resp. $k$-algebraic group) induced by scalar extension.
\item \textit{Dimensions :} If $V$ is a $k$-vector space over a field $k$, we will denote its $k$-dimension by $\dim_k V$. If $X$ is an equidimensional complex manifold (this includes in particular Mumford-Tate domains), we will denote by $\dim X$ its dimension at any smooth point. If $G$ is a real Lie group, we will denote by $\dim_\R G$ its dimension as a real-analytic manifold.
\end{itemize}
\subsection{Aknowledgements} I thank my advisor Emmanuel Ullmo for several discussions, suggestions and comments. I also thank Paul Brommer-Wierig for his careful reading and comments on an earlier version, as well as Gregorio Baldi, Joshua Lam and David Urbanik for useful related discussions.
\section{(Non)-emptiness of transverse Hodge loci and dimension of period images}
This section is devoted to proving Theorem \ref{main} using recent advances on understanding the distribution of Hodge loci. Throughout the section we fix a pure polarized integral variation of Hodge structures $\V$ on a smooth, irreducible and quasi-projective complex algebraic variety $S$. Possibly replacing $S$ by a finite étale cover, which will be harmless for our considerations, we can assume that the image of the monodromy representation associated to $\V$ is torsion-free. We denote by $(\G,D)$ its generic Hodge datum, by $\Gamma \subset \G(\Q)$ an arithmetic torsion-free lattice containing the image of the monodromy representation associated to $\V$, and by $\ph : S^\an \rightarrow \Gamma \quo D$ the resulting period map. We assume that the derived subgroup $\G^\der$ of $\G$ is $\Q$-simple.
\subsection{Transverse Hodge loci} We start by defining a notion of so-called transverse special subvarieties of $S$ for $\V$, which is a weakened, purely geometric version of typicality.
\begin{definition}
Let $Z$ be an irreducible algebraic subvariety of $S$ and $(\M,D_M)$ a Hodge sub-datum of $(\G,D)$. We say that $Z$ is \textit{defined by $(\M,D_M)$} if it is maximal for the inclusion among irreducible subvarieties of $S$ whose generic Hodge datum is a sub-datum of $(\M,D_M)$.
\end{definition}
A special subvariety of $S$ for $\V$ is always defined by its generic Hodge datum, but the usefulness of this notion is that it can also be defined by a Hodge sub-datum of $(\G,D)$ which contains strictly its generic Hodge datum. In this spirit it is natural to define the following weakening of the notion of typicality:
\begin{definition}
Let $Z$ be a special subvariety of $S$ for $\V$. We say that $Z$ is \textit{transverse} if it can be defined by a (strict) Hodge sub-datum $(\M,D_M)$ of $(\G,D)$ such that:
\[
\dim \ph(S^\an) + \dim D_M - \dim D = \dim \ph(Z^\an).
\]
We then say that $(\M,D_M)$ \textit{defines $Z$ transversally}. The \textit{transverse Hodge locus of type $\M$} is the union $\HLM_\trans$ of special subvarieties of $S$ for $\V$ which are transversally defined by some $\G(\Q)$-conjugate of $(\M,D_M)$.
\end{definition}
Obviously, a typical special subvariety of $S$ for $\V$ is automatically transverse but the resulting containment $\HLM_\typ \subset \HLM_\trans$ is strict in general as a special subvariety defined transversally by some Hodge datum $(\M,D_M)$ may in principle have a generic Hodge datum strictly smaller than $(\M,D_M)$ (in which case it is atypical).
\subsection{Criteria for (non)-existence of transverse Hodge loci} The purpose of this section is to explain how the results \cite[Thm. 3.4]{es}, \cite[Thm. 1.9 (i)]{ku} and \cite[Thm. 3.3]{bku} read in terms of transverse Hodge loci. The first two give a criterion for the existence of transverse special subvarieties:
\begin{proposition}[{\cite[Thm. 3.4]{es}, \cite[Thm. 1.9 (i)]{ku}}]\label{existence}
Let $(\M,D_M)$ be a strict Hodge sub-datum of $(\G,D)$. The following are equivalent:
\begin{itemize}
\item[$(i)$]$(\M,D_M)$ is $\V$-admissible:
\[\dim D_M + \dim \ph(S^\an) - \dim D \geqslant 0;\]
\item[$(ii)$] $\HLM_\trans\neq \emptyset$;
\item[$(iii)$] $\HLM_\trans$ is analytically dense in $S^\an$.
\end{itemize}
\end{proposition}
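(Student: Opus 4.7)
The plan is to prove the cycle $(i) \Rightarrow (iii) \Rightarrow (ii) \Rightarrow (i)$, of which only the first implication has genuine content; the other two are direct from the definitions.

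For $(ii) \Rightarrow (i)$, I would unwind the definition of transversality. If $\HLM_\trans$ is non-empty, there exists a special subvariety $Z$ transversally defined by some $\G(\Q)$-conjugate of $(\M,D_M)$, whose associated Mumford--Tate domain has the same dimension as $D_M$. The defining equality $\dim \ph(Z^\an) = \dim \ph(S^\an) + \dim D_M - \dim D$ together with $\dim \ph(Z^\an) \geqslant 0$ yields at once the $\V$-admissibility of $(\M,D_M)$. For $(iii) \Rightarrow (ii)$: since $S^\an$ is non-empty, an analytically dense subset of it cannot be empty.

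The main implication $(i) \Rightarrow (iii)$ is what relies on the cited deep results. The theorems \cite[Thm. 3.4]{es} and \cite[Thm. 1.9 (i)]{ku} assert that $\V$-admissibility of $(\M,D_M)$ implies the analytic density of the \emph{typical} Hodge locus $\HLM_\typ$ in $S^\an$. The inclusion $\HLM_\typ \subset \HLM_\trans$ recorded just before the proposition then transports this density to $\HLM_\trans$.

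There is no real technical obstacle beyond correctly reading the cited theorems into the transverse language. The only bookkeeping step is to verify that the density statements in \cite{es, ku}, originally formulated for typical special subvarieties and their $\G(\Q)$-orbits of defining Hodge sub-data, apply verbatim to the setup here, so that the containment $\HLM_\typ \subset \HLM_\trans$ can indeed be exploited to deduce density of the larger locus. This is a routine comparison of definitions.
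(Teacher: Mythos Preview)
There is a genuine gap in your argument for $(i) \Rightarrow (iii)$. You assert that the cited theorems give analytic density of the \emph{typical} Hodge locus $\HLM_\typ$, and then pass to $\HLM_\trans$ via the inclusion $\HLM_\typ \subset \HLM_\trans$. But density of $\HLM_\typ$ is \emph{not known} in general: when the expected intersection dimension $\dim D_M + \dim \ph(S^\an) - \dim D$ is zero, the special points produced by the cited arguments may have generic Mumford--Tate group strictly smaller than the relevant conjugate of $\M$, and hence be atypical rather than typical. The paper makes exactly this point in the remark following the proof of Theorem~\ref{main}; indeed, the whole reason for introducing the transverse Hodge locus is to sidestep this difficulty, so routing the argument back through $\HLM_\typ$ defeats the purpose.

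The paper's proof does not pass through $\HLM_\typ$ at all. Instead it appeals to the \emph{proof} (not merely the statement) of \cite[Thm.~1.9~(i)]{ku}, which constructs an analytically dense union of special subvarieties arising as components of intersections of the expected dimension between $\ph(S^\an)$ and projections of $\G(\Q)$-translates of $D_M$ in $\Gamma \quo D$. By the very definition of transversality, each such subvariety lies in $\HLM_\trans$ --- regardless of whether its generic Hodge datum equals the full conjugate of $(\M,D_M)$ or something strictly smaller. Thus density of $\HLM_\trans$ follows directly from what the cited proof actually builds, without ever needing density of $\HLM_\typ$.
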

\begin{proof}
Clearly one has the chain of implications $(iii)\implies(ii) \implies (i)$ so it suffices to prove $(i) \implies (iii)$. Assume that $(\M,D_M)$ is $\V$-admissible. The proof of \cite[Thm. 1.9 (i)]{ku} produces a set of special subvarieties of $S$ for $\V$ whose union is analytically dense in $S^\an$ and which correspond to intersections of expected dimension in $\Gamma \quo D$ of the projection to $\Gamma \quo D$ of some $\G(\Q)$-translate of $D_M$ with $\ph(S^\an)$. This precisely means that they are transversally defined by some $\G(\Q)$-conjugate of $(\M,D_M)$. Therefore $\HLM_\trans$ is analytically dense in $S^\an$.
\end{proof}
Similarly, the proof of \cite[Thm. 3.3]{bku} gives the following slightly stronger result, which is a criterion for non-existence of transverse special subvarieties:
\begin{proposition}[{\cite[Thm. 3.3]{bku}$+\varepsilon$}] \label{nonexistence}
Assume that $(\G,D)$ has level at least $3$. Then for any strict Hodge sub-datum $(\M,D_M)$ of $(\G,D)$, the transverse Hodge locus $\HLM_\trans$ of type $\M$ is empty.
\end{proposition}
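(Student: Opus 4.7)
The strategy is to reduce to the BKU theorem \cite[Thm. 3.3]{bku} via the existence criterion of Proposition \ref{existence}. By that proposition, non-emptiness of $\HLM_\trans$ for a strict Hodge sub-datum $(\M,D_M)$ is equivalent to $\V$-admissibility of $(\M,D_M)$, i.e., $\dim D_M + \dim \ph(S^\an) \geq \dim D$. Hence it suffices to establish that in level at least $3$, no strict Hodge sub-datum of $(\G,D)$ is $\V$-admissible.

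I would then invoke \cite[Thm. 3.3]{bku}. As stated, that theorem concerns the typical Hodge locus, but its proof takes as its operative hypothesis the $\V$-admissibility of a strict sub-datum $(\M,D_M)$ and derives a contradiction using the level $\geq 3$ assumption. Concretely, the argument combines the density of transverse intersections produced by \cite[Thm. 1.9 (i)]{ku} (cf. the proof of Proposition \ref{existence} above) with a Hodge-theoretic obstruction coming from Griffiths transversality: in level at least $3$, the decomposition $\g_\C = \bigoplus_k \g_x^{-k,k}$ forces bracket relations on the horizontal abelian subalgebras of $\g^{-1,1}$ and their translates which are incompatible with the existence of a dense family of intersections of the projection of $\G(\Q)$-translates of $D_M$ with $\ph(S^\an)$ having dimension $\dim \ph(S^\an) + \dim D_M - \dim D$.

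The ``$+\varepsilon$'' then reduces to the observation that the BKU argument works for any \emph{transverse} special subvariety $Z$ witnessed by $(\M,D_M)$, without requiring that $(\M,D_M)$ coincide with the generic Hodge datum of $Z$. This is because the obstruction is local/infinitesimal at a point of transverse intersection and uses only the dimension equality $\dim \ph(S^\an) + \dim D_M - \dim D = \dim \ph(Z^\an)$, which is precisely transversality in the sense of our definition. The main obstacle in writing this carefully is bookkeeping: one must verify that nowhere in the proof of \cite[Thm. 3.3]{bku} is the strictly stronger typicality hypothesis (namely, that $(\M,D_M)$ equals the generic Hodge datum of the intersection) actually used. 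The substantive new input — the level $\geq 3$ Lie-theoretic incompatibility — is already contained in \cite{bku} and requires no modification.
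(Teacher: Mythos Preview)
Your high-level observation is right: the ``$+\varepsilon$'' is precisely that the BKU argument only consumes the transversality dimension equality, not full typicality, and the task is to verify this. But your description of \emph{what} that argument is misfires, and the detour through Proposition~\ref{existence} is unnecessary.

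The paper proceeds directly: take a single special subvariety $Z$ transversally defined by $(\M,D_M)$, pick $x \in \ph(Z^{\an})$, and use the transversality equality $\dim D_M - \dim \ph(Z^{\an}) = \dim D - \dim \ph(S^{\an})$ together with the fact that both $\ph(S^{\an})$ and $\ph(Z^{\an})$ lie in the horizontal distribution to split this into $\dim \mt_x^{-1,1} - \dim \ph(Z^{\an}) = \dim \g_x^{-1,1} - \dim \ph(S^{\an})$ and $\dim \bigoplus_{i>1} \mt_x^{-i,i} = \dim \bigoplus_{i>1} \g_x^{-i,i}$. The second equality forces $\mt_x^{-i,i} = \g_x^{-i,i}$ for all $|i| \geq 2$. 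One then uses Andr\'e's normality theorem (with $\G^{\der}$ $\Q$-simple) to get $\mon = \G^{\der}$, whence $\g$ is generated in level $1$ by \cite[Prop.~7.4]{bku}; finally \cite[Prop.~7.5]{bku} combines level $\geq 3$ with the equalities for $|i|\geq 2$ to force $\mt = \g$.

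Your sketch invokes neither Andr\'e's theorem nor the ``generated in level $1$'' property, and the language of ``bracket relations on horizontal abelian subalgebras'' and ``incompatibility with a dense family'' points toward Carlson--Toledo--type infinitesimal arguments rather than the actual mechanism. Density plays no role here: one transverse $Z$ suffices. The routing through Proposition~\ref{existence} (non-emptiness $\Leftrightarrow$ admissibility) only brings you back to needing a transverse $Z$ to feed into the Lie-theoretic step, so it adds nothing. The genuine content you are missing is the horizontality-induced splitting of the codimension equality and the Lie-algebraic input from \cite[Prop.~7.4, 7.5]{bku}.
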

\begin{proof}
We simply adapt the proof in \cite[Sect. 7.1]{bku} to our context. Let $(\M,D_M)$ be a strict Hodge sub-datum of $(\G,D)$ such that some special subvariety $Z$ of $S$ is transversally defined by $(\M,D_M)$. Seeking for a contradiction, we want to prove that $(\M,D_M) = (\G,D)$. Let $\mt$ and $\g$ be the Lie algebras of $\M$ and $\G$. Let $x \in \ph(Z^\an)$ and for $i \in \Z$ denote by $\mt_x^{-i,i}$ and $\g_x^{-i,i}$ the pieces of the Hodge decomposition induced by $x$ on $\mt_\C$ and $\g_\C$. One has:
\[ \dim \bigoplus_{i > 0} \mt_x^{-i,i} - \dim \ph(Z^\an) = \dim \bigoplus_{i>0} \g_x^{-i,i} - \dim \ph(S^\an). \]
Because the intersection takes place inside of the horizontal distribution, this equality of dimension splits into
\[ \dim \mt_x^{-1,1} - \dim \ph(Z^\an) = \dim \g_x^{-1,1} - \dim \ph(S^\an)\]
and
\[ \dim \bigoplus_{i>1} \mt_x^{-i,i} = \dim \bigoplus_{i>1} \g_x^{-i,i}.\]
As $\mt_x^{-i,i} \subset \g_x^{-i,i}$ this last equality of dimensions implies that $\mt_x^{-i,i} = \g_x^{-i,i}$ for each $i >1$ and therefore, by Hodge symmetry,
\begin{equation}\label{eqmg}
\forall |i| \geqslant 2, \hspace{0.2cm} \mt_x^{-i,i} = \g_x^{-i,i}.
\end{equation}
The existence of $Z$ implies in particular that the period map has positive dimensional image, hence that the algebraic monodromy group $\mon$ is non-trivial. Indeed, if it was trivial, the monodromy representation would be trivial (recall that we assumed that it has torsion-free image) so that the local system underlying $\V$ would be trivial. It would then follow from \cite[Cor. 7.23]{schmid} that the variation of Hodge structures on $\V$ is trivial contradicting the positive dimensionality of the image of the period map. As we assumed that $\G^\der$ is $\Q$-simple, André's theorem \cite[Thm. 1]{andre} implies that $\mon = \G^\der$. Then \cite[Prop. 7.4]{bku} ensures that the $\Q$-Hodge-Lie algebra $\g$ is generated in level $1$ (in the sense of \cite[Def. 7.3]{bku}). As it is of level greater or equal to $3$ by assumption, \cite[Prop. 7.5]{bku} combined to (\ref{eqmg}) gives that $\mt = \g$. This finishes the proof by the connectedness of $\M$ and $\G$ and the inclusion $D_M \subset D$.
\end{proof}
\subsection{Period dimensions in higher level} We can now, combining Propositions \ref{existence} and \ref{nonexistence}, prove our first main result Theorem \ref{main}.
\begin{proof}[Proof of Theorem \ref{main}] Assume that $(\G,D)$ is of level at least $3$. If there is no strict Hodge sub-datum of $(\G,D)$, one has $m_\hl(\G,D) = + \infty$ and there is nothing to prove. Otherwise, pick one and denote it by $(\M,D_M)$. Proposition \ref{nonexistence} ensures that $\HLM_\trans$ is empty. The implication $(i) \implies (iii)$ in Proposition \ref{existence} then says that $(\M,D_M)$ cannot be $\V$-admissible, i.e.
\[
\dim \ph(S^\an) + \dim D_M - \dim D < 0
\]
which rewrites
\[
\dim \ph(S^\an) \leqslant \dim D - \dim D_M - 1.
\]
As this is true for every strict sub-datum $(\M,D_M) \subsetneq (\G,D)$, one gets the desired inequality:
\[
\dim \ph(S^\an) \leqslant \min_{(\M,D_M) \subsetneq (\G,D)} \Big[ \dim D - \dim D_M \Big] - 1 = m_\hl(\G,D).
\]
\end{proof}
\begin{remark}
The reason for working with the notion of transverse special subvarieties (instead of the usual notion of typicality) lies in the fact that the analog of Proposition \ref{existence} for the typical Hodge locus is not known when the corresponding loci are points, because of the inability to exclude the eventuality of them being atypical on account on their Mumford-Tate group being properly contained in the appropriate translate of $\M$. The point here is that one can work with the weaker notion of transverse Hodge loci as the generalization Proposition \ref{nonexistence} of \cite[Thm. 3.3]{bku} easily holds.
\end{remark}
\section{The Hodge locus bound for period data}
To be able to use Theorem \ref{main} to adress Question \ref{mainq}, we are left with proving the estimates in Theorem \ref{computation}.
\subsection{Period data}\label{perdat}
We assume given a tuple $\Sigma = (V, \psi, w, (h^{p,q}))$ which consists of
\begin{itemize}
\item a finite dimensional non-trivial $\Q$-vector space $V$;
\item a positive integer $w$;
\item a set of integers $h^{p,q}$ indexed by $\{(p,q) \in \Z^2 \hspace{0.1pt} : \hspace{0.1pt} p+q = w\}$, such that $h^{p,q} = h^{q,p}$ for every $(p,q)$, $h^{p,q} = 0$ for all but finitely many $(p,q)$ and $\dim_\Q V = \sum_{p+q = w} h^{p,q}$;
\item a non-degenerate $(-1)^w$-symmetric bilinear form $\psi : V \times V \rightarrow \Q$.
\end{itemize}
Let $D_\Sigma$ be the set of Hodge structures of weight $w$ and Hodge numbers $(h^{p,q})$ on $V$ which are polarized by $\psi$. Assume that $D_\Sigma$ is non-empty, which amounts to a sign condition on the hermitian form induced by $\psi$ on $V_\C$. Let $\G_\Sigma$ be the reductive $\Q$-group $\mathbf{GAut}(V,\psi)$ of similitudes of $(V,\psi)$. A straightforward verification of the axioms in \cite[Def. 3.1]{geoao} shows that the pair $(\G_\Sigma, D_\Sigma)$ is a Hodge datum. We will refer to $(\G_\Sigma, D_\Sigma)$ as the \textit{period datum associated to $(V, \psi, w, (h^{p,q}))$}. We will say that it is \textit{orthogonal} if $w$ is even and \textit{symplectic} if $w$ is odd. Remark that by possibly changing the weight $w$ and Tate-twisting, any period datum is isomorphic to one associated to a tuple satisfying the following assumption which we take as a convention in the sequel:
\begin{convention}\label{conv}
We will assume that tuples $(V, \psi, w, (h^{p,q}))$ defining period data are such that $h^{w,0} > 0$ and $h^{p,q} = 0$ for $p<0$ or $q<0$.
\end{convention}
For the convenience of the reader and to ease the application of our bounds, we now give explicit formulae to compute the level of a period datum $(\G,D)$ associated to a tuple $(V, \psi, w, (h^{p,q}))$. Let $x \in D$ be any point, and recall that, as explained in the introduction, one gets a $\Q$-Hodge structure of weight $0$ on the Lie algebra $\g$ of $\G$, whose associated Hodge decomposition we denote by
\[
\g_{\C} = \bigoplus_{k \in \Z} \g_{x}^{-k,k}.
\]
Different choices of $x \in D$ give isomorphic real Hodge structures on $\g_{\R}$, and in particular, the Hodge numbers $h_{\inf}^{k} := \dim_\C \g_{x}^{-k,k}$ don't depend on the choice of $x$ and satisfy Hodge symmetry $h_{\inf}^{k} = h_{\inf}^{-k}$. We will call them the \textit{infinitesimal Hodge numbers} of $(\G,D)$. The following is a direct consequence of the block decompositions presented in \cite[Sect. 3]{ckt}:
\begin{proposition}\label{infhodgenum}
Let $(\G,D)$ be the period datum associated to a tuple $(V, \psi, w, (h^{p,q}))$. 
\begin{itemize}
\item Assume $(\G,D)$ is orthogonal, i.e. $w = 2n$ is even. The infinitesimal Hodge numbers are
\[
h_{\inf}^{2k} = \sum_{j = 0}^{n-k-1} h^{2n-j,j}h^{2n-j-2k, j+2k} + \frac{1}{2}h^{n+k,n-k}(h^{n+k,n-k}-1)
\]
and
\[
h_{\inf}^{2k+1} = \sum_{j = 0}^{n-k-1} h^{2n-j,j}h^{2n-j-2k-1, j+2k+1}
\]
for $k \geqslant 0$.
\item Assume $(\G,D)$ is symplectic, i.e. $w = 2n+1$ is odd. The infinitesimal Hodge numbers are
\[
h_{\inf}^{2k} = \sum_{j = 0}^{n-k} h^{2n+1-j,j}h^{2n+1-j-2k, j+2k}
\]
and
\[
h_{\inf}^{2k+1} = \sum_{j = 0}^{n-k-1} h^{2n+1-j,j}h^{2n-j-2k, j+2k+1} + \frac{1}{2}h^{n+k+1,n-k}(h^{n+k+1,n-k}+1)
\]
for $k \geqslant 0$.
\end{itemize}
\end{proposition}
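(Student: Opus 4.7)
The plan is to unwind the definition of the Hodge decomposition $\g_\C = \bigoplus_k \g_x^{-k,k}$ directly. Since $\G = \mathbf{GAut}(V,\psi)$ embeds in $\gl(V)$, we have $\g \subset \End(V)$, and the Hodge structure on $\End(V_\C)$ induced by $x \in D$ decomposes as
\[
\End(V_\C)^{-k,k} = \bigoplus_{p+q = w} \Hom(V^{p,q}, V^{p-k, q+k}).
\]
The subspace $\g_x^{-k,k}$ is then cut out inside $\End(V_\C)^{-k,k}$ by the similitude Lie algebra condition $\psi(A\cdot,\cdot) + \psi(\cdot, A\cdot) = \lambda\, \psi(\cdot,\cdot)$. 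A Hodge-type check using that $\psi$ pairs $V^{p,q}$ with $V^{q,p}$ forces the scalar $\lambda$ to vanish whenever $k \neq 0$, so the problem reduces to the purely semisimple constraint $\psi(Ax, y) + \psi(x, Ay) = 0$, in either $\mathfrak{so}$ or $\mathfrak{sp}$.

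Next I would decompose $A \in \g_x^{-k,k}$ into its components $A_p : V^{p,q} \to V^{p-k, q+k}$ and analyse how the polarization constraint couples them via the involution $\sigma(p,q) := (q+k, p-k)$ on the index set $\{(p,q) : p+q = w\}$. For a non-fixed orbit $\{(p,q), \sigma(p,q)\}$, the $\psi$-duality between $V^{p,q}$ and $V^{q,p}$ turns the constraint into the requirement that $A_{\sigma(p,q)}$ is determined as the negative transpose of $A_p$, and the orbit contributes the single factor $\dim_\C \Hom(V^{p,q}, V^{p-k, q+k}) = h^{p,q} h^{p-k, q+k}$ to $h_{\inf}^k$. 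A fixed point of $\sigma$ requires $p - q = k$ together with $p + q = w$, which has an integer solution exactly when $w$ and $k$ share the same parity; in that case $A_p$ is self-paired and the constraint becomes $B(y,x) = (-1)^{w+1} B(x,y)$ on the bilinear form $B(x,y) := \psi(A_p x, y)$ on $V^{p,q}$, producing an antisymmetric form of dimension $\frac{h^{p,q}(h^{p,q}-1)}{2}$ when $w$ is even and a symmetric form of dimension $\frac{h^{p,q}(h^{p,q}+1)}{2}$ when $w$ is odd.

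To finish, I would enumerate $\sigma$-orbits by picking the representative with largest first coordinate. Parametrising this representative by $j = w - p$ then reproduces the summation ranges appearing in the statement, and a parity case-split on $w$ and $k$ determines whether a fixed orbit contributes a boundary term: the $\tfrac{1}{2}h^{n+k,n-k}(h^{n+k,n-k}-1)$ correction appears in the orthogonal, even-shift case, while the $\tfrac{1}{2}h^{n+k+1,n-k}(h^{n+k+1,n-k}+1)$ correction appears in the symplectic, odd-shift case. The main obstacle is not conceptual but purely organisational: matching the summation ranges correctly, locating the fixed orbit (when it exists) across the four weight--shift parity combinations, and making sure that partners of each $\sigma$-orbit are not double-counted.
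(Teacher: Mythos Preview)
Your proposal is correct and is precisely the computation underlying the ``block decompositions'' of \cite[Sect.~3]{ckt} that the paper invokes in lieu of a proof; the paper itself gives no argument beyond that citation. Your orbit analysis under the involution $\sigma(p,q) = (q+k, p-k)$, the parity check for fixed points, the identification of the self-paired component with (anti)symmetric bilinear forms on $V^{p,q}$ via $B(x,y) = \psi(A_p x, y)$, and the enumeration by $j = w - p$ all reproduce exactly the stated formulae, so there is nothing to add.
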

An immediate consequence is the following characterization of period data of level at least $3$. We emphasize once again that we are here using Convention \ref{conv}.
\begin{corollary}\label{levelcrit}
Let $(\G, D)$ be a period datum associated to the tuple $(V, \psi, w, (h^{p,q}))$ and assume that $\G^\der$ is $\Q$-simple. Then $(\G,D)$ is of level at least $3$ if, and only if one of the following cases occur:
\begin{enumerate}
\item $w$ is odd and $w \geqslant 3$;
\item $w = 2n$ is even with $n \geqslant 2$ and at least one of the following holds:
\begin{enumerate}
\item[(2.a)] for some $2 \leqslant k \leqslant n$ one has $h^{n+k,n-k} > 1$;
\item[(2.b)] for some $2 \leqslant k \leqslant n$ and some $0 \leqslant j \leqslant n-k-1$ one has \[ h^{2n-j,j}h^{2n-j-2k, j+2k} > 0;\]
\item[(2.c)] for some $1 \leqslant k \leqslant n-1$ and some $0 \leqslant j \leqslant n-k-1$ one has \[h^{2n-j,j}h^{2n-j-2k-1, j+2k+1}>0.\]
\end{enumerate}
 \end{enumerate}
\end{corollary}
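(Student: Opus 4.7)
The strategy is to apply the closed-form expressions in Proposition \ref{infhodgenum} for the infinitesimal Hodge numbers $h_{\inf}^k$, since by definition $(\G,D)$ has level at least $3$ if and only if $h_{\inf}^l > 0$ for some integer $l \geq 3$. I would proceed by a direct case analysis on the parity of $w$, and in the even case on the value of $n$.

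When $w = 2n+1$ is odd, specializing the formula for $h_{\inf}^{2k+1}$ to $k=n$ collapses the sum $\sum_{j=0}^{-1}$ to the empty sum and leaves only the term $\frac{1}{2} h^{2n+1,0}(h^{2n+1,0}+1)$. Convention \ref{conv} forces $h^{w,0} > 0$, so $h_{\inf}^{w} > 0$; combined with the general upper bound $|l| \leq w$ on the nonvanishing levels of the weight-$0$ Hodge structure on $\g$ (since $\g \subset \mathrm{End}(V)$ with its induced Hodge structure), this gives $\mathrm{level}(\G,D) = w$. Hence in the odd case, level $\geq 3$ is equivalent to $w \geq 3$, which is case (1).

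When $w = 2n$ is even, I would first dispose of $n = 1$: for each $k \geq 2$ the sum $\sum_{j=0}^{n-k-1}$ is empty, and the boundary terms $h^{1+k,1-k}$ and $h^{2+k,-k}$ vanish by Convention \ref{conv} (their second index is negative), so $h_{\inf}^l = 0$ for $l \geq 3$ and level $\leq 2$. For $n \geq 2$, I read off each formula: $h_{\inf}^{2k} > 0$ for some $k \geq 2$ if and only if either $h^{n+k,n-k} \geq 2$ for some $2 \leq k \leq n$ (this is (2.a)) or some summand $h^{2n-j,j} h^{2n-j-2k,j+2k}$ with $0 \leq j \leq n-k-1$ is nonzero (this is (2.b)); and similarly $h_{\inf}^{2k+1} > 0$ for some $k \geq 1$ if and only if some summand $h^{2n-j,j} h^{2n-j-2k-1,j+2k+1}$ with $0 \leq j \leq n-k-1$ is nonzero (this is (2.c)). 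Taking the disjunction over admissible $k$ recovers exactly (2.a) $\vee$ (2.b) $\vee$ (2.c), which is case (2).

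The whole argument is essentially indexing bookkeeping; the only point requiring care is tracking how Convention \ref{conv} collapses the sums and forces boundary terms to vanish in the extremal values of $k$. I do not anticipate any serious obstacle beyond this verification.
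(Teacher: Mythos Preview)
Your proposal is correct and follows essentially the same approach as the paper: both arguments reduce the claim to the explicit formulas of Proposition \ref{infhodgenum} and read off when $h_{\inf}^{l} > 0$ for some $l \geq 3$, splitting on the parity of $w$. The only cosmetic difference is that the paper disposes of small even weights via the single observation ``level $\leq w$'' (coming from $\g \subset \End(V)$, which you also invoke in the odd case), whereas you treat $n=1$ by hand; you may wish to mention that the same trivial check (or the bound level $\leq w$) also covers $n=0$.
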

\begin{proof}
It is clear from the formulae in Proposition \ref{infhodgenum} that under Convention \ref{conv} the level of a period datum with $\Q$-simple derived Mumford-Tate group is lower or equal to its weight $w$. In particular, if $w$ is odd and $(\G,D)$ has level at least $3$ then $w \geqslant 3$. Conversely, if $w$ is odd and at least $3$, by Convention \ref{conv} one has $h^{w,0} > 0$ and Proposition \ref{infhodgenum} gives that $h_{\inf}^{w} = \frac{1}{2}h^{w,0}(h^{w,0} + 1) > 0$. Therefore $(\G,D)$ has level at least $w$ hence at least $3$. If $w$ is even, in view of Proposition \ref{infhodgenum}, at least one of the conditions (2.a-c) is satisfied if, and only if $h_{\inf}^k > 0$ for some $k \geqslant 3$.
\end{proof}
Finally, we will need the following version of \cite[Lemme 3.3]{equllmo} whose proof we recall for completeness, to produce Hodge subdata of period data:
\begin{lemma}\label{crit}
Let $(\G,D)$ be a Hodge datum and $\M$ be a $\Q$-algebraic subgroup of $\G$. Assume that some $x \in D$ factors as \[x : \s \rightarrow \M_\R \rightarrow \G_\R.\] Let $D_M$ be the $\M(\R)$-orbit of $x$ in $\Hom_\R(\s, \M_\R)$. Then $(\M,D_M)$ is a Hodge subdatum of $(\G,D)$.
\end{lemma}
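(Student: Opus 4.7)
The plan is to verify the defining axioms of a Hodge datum for the pair $(\M, D_M)$ and to check that $D_M$ embeds in $D$ via the inclusion $\M \hookrightarrow \G$. Concretely, the three axioms to verify are: connected reductivity of $\M$, the type $\{(-k,k) : k \in \Z\}$ of the adjoint Hodge structure on $\mt_\R$ induced by $x$, and the Cartan property of $\ad(x(i))$ on $\mt^\ad_\R$.

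First, for the type axiom, I would use that since $x$ factors through $\M_\R$, the adjoint action of $\s$ via $x$ stabilizes $\mt_\C \subset \g_\C$. Setting $\mt_x^{-k,k} := \mt_\C \cap \g_x^{-k,k}$ therefore yields a direct sum decomposition $\mt_\C = \bigoplus_k \mt_x^{-k,k}$ which is a sub-Hodge structure of the one on $\g_\C$; in particular it has the required type $\{(-k,k)\}$.

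The main obstacle is the Cartan involution axiom, which also forces the reductivity of $\M$. The key input is the classical fact that a Lie subalgebra of a reductive real Lie algebra that is stable under a Cartan involution $\theta$ is itself reductive, with $\theta$ restricting to a Cartan involution on it. I would apply this to $\theta = \ad(x(i))$, which is a Cartan involution of $\g^\ad_\R$ by the Hodge-datum axioms for $(\G,D)$, and to the image of $\mt_\R$ in $\g^\ad_\R$, which is $\theta$-stable because $x(i) \in \M(\R)$ preserves $\mt_\R$ under the adjoint action. This simultaneously gives the reductivity of the identity component $\M^\circ$ (replacing $\M$ by $\M^\circ$ is harmless since $D_M$ is the $\M(\R)$-orbit of $x$ and lies inside the $\M(\R)^\circ$-orbit) and the Cartan property of $\ad(x(i))$ on $\mt^\ad_\R$.

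Finally, the inclusion $\iota : \M_\R \hookrightarrow \G_\R$ induces an $\M(\R)$-equivariant map $\iota_* : \Hom_\R(\s, \M_\R) \to \Hom_\R(\s, \G_\R)$ sending $x$ to $x$, so the $\M(\R)$-orbit $D_M$ in the source injects into the $\G(\R)$-orbit of $x$ in the target, which is $D$. This gives the desired embedding $D_M \hookrightarrow D$ and concludes the proof.
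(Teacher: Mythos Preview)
Your argument is correct, but it takes a genuinely different route from the paper's, in two respects.

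First, you and the paper verify different (though equivalent) axioms. The paper works with the axioms (HD$0$) and (HD$1$) of \cite[Def.~3.1]{geoao}: for (HD$0$) it observes that the weight cocharacter $w_x := x \circ w$ is defined over $\Q$ and has image in $Z(\G)$ (since $(\G,D)$ is a Hodge datum), hence in $\M \cap Z(\G) \subset Z(\M)$. You instead verify the ``type $\{(-k,k)\}$'' condition on $\mt$, which encodes the same weight-zero information but does not literally address the $\Q$-rationality and centrality of $w_x$; these are, however, inherited from $(\G,D)$ exactly as in the paper, so nothing is lost.

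Second, and more substantively, your verification of the Cartan axiom proceeds via the Lie-theoretic fact that a $\theta$-stable subalgebra of a real reductive Lie algebra is reductive with $\theta$ restricting to a Cartan involution, applied to $\theta = \ad(x(i))$ and to the image of $\mt_\R$ in $\g^\ad_\R$. The paper instead argues through polarizations: the Killing form of $\g$ is a $C$-polarization (with $C = x(i)$) for the faithful representation of $\M^\ad(\R)$ on $\g_\R$, and Deligne's criterion \cite[(1.1.15)]{delshim} then yields directly that $\ad(C)$ is a Cartan involution of $\M^\ad(\R)$. Your approach is more self-contained and yields reductivity of $\M^\circ$ as a by-product; the paper's is shorter once Deligne's criterion is available and avoids any discussion of connected components. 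One small slip in your write-up: the $\M(\R)$-orbit of $x$ \emph{contains} the $\M(\R)^\circ$-orbit, not the other way around; but this does not affect the argument, since one may simply replace $\M$ by $\M^\circ$ from the outset.
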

\begin{proof}
By assumption, $\M$ is a $\Q$-algebraic subgroup of $\G$ and post-composition with the inclusion maps $D_M$ inside of $D$. Therefore it suffices to prove that $(\M,D_M)$ is a Hodge datum, i.e. to check the axioms (HD$0$) and (HD$1$) of \cite[Def. 3.1]{geoao}. Let $w : \mult_{m,\R} \rightarrow \s$ be the weight homomorphism and $w_x := x \circ w$. As $(\G,D)$ is a Hodge datum, the morphism $w_x$ is defined over $\Q$ and has image in $Z(\G_\R)$ hence in $\M_\R \cap Z(\G_\R) \subset Z(\M_\R)$. This proves (HD$0$). Let $C = x(i)$. The Killing form is a $C$-polarization for the faithful representation of $\M^\ad(\R)$ on $\g_\R$. By \cite[(1.1.15)]{delshim}, the involution $\ad(C)$ is therefore a Cartan involution of $\M^\ad(\R)$. This proves (HD$1$).
\end{proof}
\subsection{The orthogonal case}
Let $(\G,D)$ be a period datum of orthogonal type associated to a tuple $(V,\psi, w = 2n, (h^{p,q}))$. The first part of Theorem \ref{computation} follows from Theorem \ref{main} and the following:
\begin{proposition}\label{hodgesuborth}
Assume $h^{n,n} \neq 0$. Then $(\G,D)$ has a Hodge subdatum $(\M,D_M)$ such that
\[
\dim D - \dim D_M = \sum_{i = 1}^n h^{n-i,n+i}.
\]
\end{proposition}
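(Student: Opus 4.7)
The plan is to exhibit explicitly a Hodge subdatum realizing the claimed codimension. Since $h^{n,n} \neq 0$, I will first produce a pair $(x, v)$ consisting of a point $x \in D$ and a nonzero rational vector $v \in V$ with $v \in V_x^{n,n}$, and then take $\M := \stab_\G(v)$, the $\Q$-algebraic subgroup stabilizing the line $\Q v$. To construct such a pair, pick any $x_0 \in D$ and any nonzero $v_0 \in V_{x_0}^{n,n}$; the Riemann--Hodge bilinear relations imply $\psi(v_0, v_0) \neq 0$. Because $\G$ contains the similitude center $\mult_m$, the $\G(\R)$-orbit of $v_0$ in $V_\R$ equals the open subset of vectors $v$ with $\psi(v,v)$ of the same sign as $\psi(v_0, v_0)$. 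This open set contains a rational vector $v$, which then lies in $V_{g \cdot x_0}^{n,n}$ for a suitable $g \in \G(\R)$.

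With $(x, v)$ in hand, the image $x(\s)$ acts on $V_x^{n,n}$ by real scalars and hence stabilizes $\R v$; thus $x(\s) \subset \M_\R$, and Lemma~\ref{crit} supplies a Hodge subdatum $(\M, D_M := \M(\R) \cdot x)$ of $(\G, D)$. To compute $\dim D - \dim D_M$, I would use the tangent-space identifications $T_x D = \bigoplus_{k > 0} \g_x^{-k, k}$ and $T_x D_M = \bigoplus_{k > 0} \mt_x^{-k, k}$, where the Lie algebra of $\M$ is $\mt = \{A \in \g : Av \in \Q v\}$. For $k > 0$, any $A \in \g_x^{-k, k}$ sends $V_x^{n, n}$ into $V_x^{n-k, n+k}$, so the constraint $Av \in \Q v \subset V_x^{n,n}$ forces $Av = 0$; hence $\mt_x^{-k, k}$ is the kernel of the evaluation map $\mathrm{ev}_v : \g_x^{-k, k} \to V_x^{n-k, n+k}$, $A \mapsto Av$.

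The principal technical step, and where I expect the real work to lie, is to establish surjectivity of $\mathrm{ev}_v$ for $1 \leqslant k \leqslant n$ (for $k > n$ the target vanishes by Convention~\ref{conv}). For this I would use that the center of $\g = \mathfrak{go}(V, \psi)$ is of Hodge type $(0,0)$, so $\g_x^{-k, k} = \mathfrak{o}(V, \psi)_x^{-k, k}$ for $k > 0$. Given a target $w \in V^{n-k, n+k}$, I would construct $A \in \mathfrak{o}(V, \psi)_x^{-k, k}$ with $Av = w$ by prescribing its action on the Hodge pieces of $V$: send $v$ to $w$; send a $\psi$-dual vector of $w$ lying in $V^{n+k, n-k}$ to a multiple of $v$, with the scalar forced by the infinitesimal orthogonality relation $\psi(Au_1, u_2) + \psi(u_1, Au_2) = 0$; and set $A$ to zero on the remaining Hodge pieces. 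The non-degeneracy of $\psi|_{V^{n,n}}$ furnished by Riemann--Hodge makes this assignment consistent. Granting surjectivity, $\dim \g_x^{-k, k} - \dim \mt_x^{-k, k} = h^{n-k, n+k}$ for $1 \leqslant k \leqslant n$ and vanishes otherwise, so summing over $k$ yields $\dim D - \dim D_M = \sum_{i=1}^n h^{n-i, n+i}$.
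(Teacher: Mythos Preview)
Your construction of the subdatum $(\M, D_M) = (\stab_\G(v),\, \M(\R)\cdot x)$ coincides with the paper's; the difference lies only in the dimension computation. The paper identifies $D \cong \mathrm{SO}(r,s)/K$ and $D_M \cong \mathrm{SO}(r,s-1)/K'$ as quotients of explicit real Lie groups and subtracts their real dimensions directly. You instead compute the holomorphic codimension Hodge-degree by Hodge-degree, identifying $\mt_x^{-k,k}$ for $k>0$ with the kernel of $\mathrm{ev}_v:\g_x^{-k,k}\to V_x^{n-k,n+k}$ and showing surjectivity. Your route is more intrinsic (no real-form bookkeeping) and makes transparent where each summand $h^{n-k,n+k}$ of the codimension originates; the paper's is quicker once the real-group descriptions are granted. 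Two small points: to deduce $\psi(v_0,v_0)\neq 0$ from the polarization you should take $v_0$ \emph{real} in $V_{x_0}^{n,n}$, since the second Riemann bilinear relation gives $\psi(v_0,\bar v_0)\neq 0$ rather than $\psi(v_0,v_0)\neq 0$; and your surjectivity sketch can be replaced by the one-line observation that $A(u):=\psi(u,v)\,w-\psi(u,w)\,v$ lies in $\mathfrak{o}(V,\psi)_x^{-k,k}$ and satisfies $A(v)=\psi(v,v)\,w$.
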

\begin{proof}
For $x \in D$, denote by
\[
V \otimes_\Q \C = \bigoplus_{p+q = 2n} V_x^{p,q}
\]
the associated Hodge decomposition. We first claim that there are some $x \in D$ and non-zero $v \in V$ such that $v \in V_x^{n,n}$. Indeed, let $x_0 \in D$ and pick $v_0 \in V_{x_0}^{n,n}$ a non-zero real vector, which is possible because $h^{n,n} \neq 0$ and $V_{x_0}^{n,n}$ is self-conjugate. Since $\G(\R)$ acts transitively on $V_\R - \{0\}$, there exists a $g \in \G(\R)$ such that $v := g(v_0) \in V-\{0\}$. Setting $x = g \cdot x_0$, the claim then simply follows from the fact that by definition of the action of $\G(\R)$ on $D$, one has $g(v_0) \in V_{g\cdot x_0}^{n,n}$. Let $\M = \stab_\G(v)$ which is a $\Q$-algebraic subgroup of $\G$, and $D_M = \M(\R) \cdot x$. As $v$ is a Hodge vector for $x$, one has for any $z \in \s(\R)$ that $x(z)(v) = |z|^n v$ so that, seen as a morphism of real algebraic groups $\s \rightarrow \G_\R$, $x$ factors through $\M_\R$. By Lemma \ref{crit}, this ensures that $(\M,D_M)$ is a Hodge subdatum of $(\G,D)$.

It remains to check that $D_M$ has the right codimension in $D$. For this, recall that as real-analytic manifolds $D \cong \G^\der(\R)/K$, that $\G^\der(\R) \cong \mathrm{SO}(r,s)$ as real Lie groups, and under this identification $K \cong \mathrm{SO}(h^{n,n}) \times \prod_{i = 1}^n \mathrm{U}(h^{n-i, n+i})$, where $r = \sum_{k} h^{n-2k-1, n+2k+1}$ and $s = \sum_k h^{n-2k, n+2k}$. Similarly $D_M \cong \M^\der(\R) / (K \cap \M^\der(\R))$ and under the above identification, $\M^\der(\R) \cong \mathrm{SO}(r,s-1)$ and $K\cap \M^\der(\R) \cong \mathrm{SO}(h^{n,n}-1) \times \prod_{i = 1}^n \mathrm{U}(h^{n-i,n+i})$. The dimension count is therefore as follows:
\begin{eqnarray*}
2 (\dim D - \dim D_M) & = & \dim_\R \mathrm{SO}(r,s) - \dim_\R \mathrm{SO}(r,s-1) \\
&& + \dim_\R \mathrm{SO}(h^{n,n}-1) \times \prod_{i = 1}^n \mathrm{U}(h^{n-i,n+i}) \\
&& -  \dim_\R \mathrm{SO}(h^{n,n}) \times \prod_{i = 1}^n \mathrm{U}(h^{n-i, n+i})\\
& = & \frac{d(d-1)}{2} - \frac{(d-1)(d-2)}{2} + \frac{(h^{n,n}-1)(h^{n,n}-2)}{2} \\ && - \frac{h^{n,n}(h^{n,n}-1)}{2} \\
& = & d-h^{n,n}. 
\end{eqnarray*}
where $d = r+s$ is the sum of all Hodge numbers.
\end{proof}
\begin{proof}[Proof of Theorem \ref{computation}(a)]
Let $\V$, $S$, $\ph$ and $(\G,D)$ be as in the statement. If the image of the period map has dimension $0$ there is nothing to prove. So we assume that $\dim \ph(S^\an) > 0$. Since $h^{n,n} \neq 0$ by assumption, Proposition \ref{hodgesuborth} shows that \[m_\hl(\G,D) \leqslant \Big(\sum_{i = 1}^n h^{n-i,n+i}\Big) - 1. \] Using Theorem \ref{main}, it remains to show that $\G^\der$ is $\Q$-simple and that $(\G,D)$ has level at least $3$. Because $\dim \ph(S^\an) > 0$, Griffiths transversality ensures that the infinitesimal Hodge number $h_{\inf}^1$ is non-zero. By Proposition \ref{infhodgenum}, there is a $0 \leqslant j \leqslant n-1$ such that $h^{2n-j,j}h^{2n-j-1, j+1} > 0$, and in particular $h^{2n-k,k} > 0$ for some $1 \leqslant k \leqslant n-1$. Summing up, we find that $h^{2n,0} = h^{0,2n} > 0$ by Convention \ref{conv}, that $h^{n,n} > 0$ by assumption, and that $h^{2n-k,k} = h^{k, 2n-k} > 0$. Since $n \geqslant 2$, this gives five distinct non-zero Hodge numbers so that $\dim_\Q V \geqslant 5$. This shows that $\G^\der = \mathbf{SO}(V,\psi)$ is $\Q$-simple by \cite[Prop. 2.14]{platrap} (the only non-simple case occurs for $\dim_\Q V = 4$). Finally, denote by $l = 2n-k$. We have:
\[
h^{2n,0}h^{2n-l,l} = h^{2n,0}h^{k, 2n-k} > 0.
\]
Since $n \geqslant 2$, we have $l \geqslant 3$ and we are either in case (2.b) or in case (2.c) of Corollary \ref{levelcrit}. This shows that $(\G,D)$ has level at least $3$, and this finishes the proof as explained above.
\end{proof}
\subsection{The symplectic case} Let $(\G,D)$ be a period datum of symplectic type associated to a tuple $(V,\psi, w = 2n+1, (h^{p,q}))$. The second part of Theorem \ref{computation} follows from Theorem \ref{main} and the following:
\begin{proposition}\label{hodgedatsymp}
Assume that $2d := \dim_\Q V > 2$ and let $h^{r,s}$ be a non-zero Hodge number. Then $(\G,D)$ has a Hodge subdatum $(\M,D_M)$ such that
\[
\dim D - \dim D_M =  \Big(2 \sum_{i = 0}^n h^{n-i, n+i+1}\Big) - h^{r,s} - 1.
\]
\end{proposition}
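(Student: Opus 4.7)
The strategy is to mimic Proposition \ref{hodgesuborth}, replacing the stabilizer of a rational Hodge vector by the stabilizer of a rational symplectic splitting $V = V' \oplus V''$ in which $V''$ is a two-dimensional sub-Hodge structure of type $(r,s) + (s,r)$ at a suitable $x \in D$. This substitution is forced by the odd weight: no non-zero rational vector of $V$ can lie in a single Hodge piece $V_x^{r,s}$, but a rational sub-Hodge structure of $\Q$-dimension $2$ and Hodge type $(r,s) + (s,r)$ can be produced.

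For the construction, I would start from any $x_0 \in D$ and a non-zero $u \in V_{x_0}^{r,s}$, which exists by the assumption $h^{r,s} \neq 0$. The polarization condition forces $\psi(u, \bar u) \neq 0$, so $W_0 := \C u \oplus \C \bar u$ is a self-conjugate complex $2$-plane on which $\psi$ is non-degenerate, and its real form $W_{0,\R}$ is a real symplectic $2$-plane in $V_\R$. Since $\dim_\Q V > 2$ and $\psi$ is non-degenerate on $V$, a rational symplectic $2$-plane $V'' \subset V$ also exists; let $V'$ denote its symplectic orthogonal complement. By Witt's theorem, $\ssp(V,\psi)(\R)$ acts transitively on real symplectic $2$-planes, so some $g \in \ssp(V,\psi)(\R) \subset \G(\R)$ sends $W_{0,\R}$ onto $V''_\R$. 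Setting $x := g \cdot x_0$, the $\G(\R)$-equivariance of Hodge decompositions shows that $V''_\C = (V'')_x^{r,s} \oplus (V'')_x^{s,r}$; hence $V''$ (and therefore $V'$) is a rational sub-Hodge structure of $V$ at $x$, with $V''$ of type $(r,s) + (s,r)$ and with Hodge numbers on $V'$ given by ${h'}^{p,q} = h^{p,q}$ outside $(r,s)$ and $(s,r)$, and ${h'}^{r,s} = {h'}^{s,r} = h^{r,s} - 1$. I then let $\M := \gsp(V', \psi|_{V'}) \times_{\mult_{m,\Q}} \gsp(V'', \psi|_{V''})$ be the $\Q$-subgroup of $\G$ preserving the splitting. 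By construction $x : \s \to \G_\R$ factors through $\M_\R$, so Lemma \ref{crit} yields the Hodge subdatum $(\M, D_M)$.

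The bulk of the work is then the codimension count. The block-form description of symplectic Lie algebras provides an isomorphism of $\Q$-Hodge structures $\g / \mt \cong \Hom(V'', V')$, giving
\[
\dim_\C D - \dim_\C D_M = \sum_{k \geq 1} \dim_\C \Hom(V'', V')_x^{-k,k} = \sum_{k \geq 1} \left[ {h'}^{r-k, s+k} + {h'}^{s-k, r+k} \right]
\]
via the Hodge type of $V''$. The main obstacle is the final bookkeeping. Assuming without loss of generality $r > s$, the only $k \geq 1$ for which ${h'}$ differs from $h$ in this sum is $k = r-s$ (where $(r-k, s+k) = (s,r)$), which contributes a $-1$ correction; then Hodge symmetry $h^{p,q} = h^{q,p}$ together with the identity $\sum_{p+q=2n+1} h^{p,q} = 2d = 2\sum_{i=0}^n h^{n-i,n+i+1}$ reduces $\sum_{k \geq 1}[h^{r-k,s+k} + h^{s-k,r+k}]$ to $2d - h^{r,s}$, so that $\dim D - \dim D_M = 2d - h^{r,s} - 1$, matching the claim.
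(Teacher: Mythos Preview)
Your proposal is correct and follows essentially the same construction as the paper: both produce a rational symplectic $2$-plane that is a sub-Hodge structure of type $(r,s)+(s,r)$ at some $x\in D$ (via transitivity of $\ssp(V,\psi)(\R)$ on such planes) and take $\M$ to be the stabilizer of the resulting splitting. The only difference is in the codimension count: the paper computes $\dim D-\dim D_M$ from the explicit real forms $\mathrm{Sp}_{2d}(\R)$, $\mathrm{Sp}_{2d-2}(\R)\times\mathrm{Sp}_2(\R)$ and their maximal compacts, whereas you read it off the Hodge decomposition of $\g/\mt\cong\Hom(V'',V')$; both give $2d-h^{r,s}-1$.
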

\begin{proof}
For $x \in D$, denote by
\[
V \otimes_\Q \C = \bigoplus_{p+q = 2n+1} V_x^{p,q}
\]
the associated Hodge decomposition. Since $\dim_\Q V > 0$, and the dimension of $V$ is the sum of the Hodge numbers, one of them is non-zero, say $h^{r,s}$ with $r>s$. Fix a non-zero vector $v_0 \in V_x^{r,s}$ and let $V_{0,\R}$ be the real vector subspace of $V_\R$ whose complexification is $V_{0,\C} := \C v_0 \oplus \C \overline{v_0}$. We first claim that we can choose $x \in D$ and $v_0 \in V_x^{r,s}$ so that $V_{0,\R}$ is the scalar extension to $\R$ of a $\Q$-vector subspace of $V$. Indeed, setting $2v_0^+ = v_0 + \overline{v_0}$ and $2iv_0^- = v_0 - \overline{v_0}$, one has by definition that $V_{0,\R}$ is the $\R$-span of $(v_0^-,v_0^+)$. Furthermore, because $\psi$ is a polarization of $x$, we have $\psi_\C(v_0,\overline{v_0}) \neq 0$ so that $\psi_\R(v_0^-, v_0^+) \neq 0$. In particular $(v_0^-,v_0^+)$ extends to a symplectic base of $(V_\R, \psi_\R)$. As $\G(\R)$ acts transitively on symplectic bases, there exists an element $g \in \G(\R)$ which sends $v_0^-$ and $v_0^+$ to rational vectors $v_1^-$ and $v_1^+$ satisfying $\psi(v_1^-, v_1^+) \neq 0$. Let $V_1$ be the span of $(v_1^-, v_1^+)$ and $v_1 =  v_1^+ + i v_1^-$ which belongs to $V_{x_1}^{n,n}$ where $x_1 = g \cdot x$. The pair $(x_1, v_1)$ satisfies the assumptions of the claim.

The assumption made on $\dim_\Q V$ implies that $V_1$ is a strict real vector subspace of $V$. Furthermore, we saw above that $\psi$ restricts to a non-degenerate skew-symmetric form on $V_1$. In particular, we get an orthogonal decomposition
\[
V = V_1 \oplus V_1^\bot
\]
into non-zero rational polarized Hodge substructures. Let $\M$ be the $\Q$-algebraic subgroup of $\G$ preserving this decomposition, which is the fixator in $\G$ of the projection on $V_1$ along $V_1^\bot$ (through the natural action of $\G$ on $\End(V)$ by conjugation). We claim that $x$ factors as $\s \rightarrow \M_\R \hookrightarrow \G_\R$. To see that, we need to prove that for $z \in \s(\R)$, the subspace $V_{1,\R}$ is preserved by $x(z)$ (this is enough to ensure that $x(z) \in \M_\R$ because $x(z)$ respects $\psi_\R$ up to scaling). Let $v \in V_1$ and decompose it as $v = \lambda v_1 + \overline{\lambda} \overline{v_1}$. Then
\[
x(z)(v) = \lambda z^r \overline{z}^s v_1 + \overline{\lambda} z^s\overline{z}^r \overline{v_1} \in V_1,
\]
which proves the claim. Then, $(\M,D_M)$ is a Hodge subdatum of $(\G,D)$ by Lemma \ref{crit}

%
We are now left with computing dimensions. We have $D \cong \G^\der(\R)/K$ as real-analytic manifolds where $\G^\der(\R) \cong \mathrm{Sp}_{2d}(\R)$ as real Lie groups and under this identification $K \cong \prod_{i=0}^n \mathrm{U}(h^{n+i+1,n-i})$. Similarly, $D_M \cong \M^\der(\R)/(K \cap \M^\der(\R))$ and under the above identifications of real Lie groups, we have $\M^\der(\R) \cong \mathrm{Sp}_{2d-2}(\R)\times\mathrm{Sp}_2(\R)$ and $K \cap \M^\der(\R) \cong \mathrm{U}(1) \times \mathrm{U}(h^{r,s}-1)\times \prod_{0 \leqslant i \leqslant n, i \neq n-s}\mathrm{U}(h^{n+i+1, n-i})$. The dimension count is therefore as follows:
\begin{eqnarray*}
2 (\dim D - \dim D_M) & = & \dim_\R \mathrm{Sp}_{2d}(\R) - \dim_\R \mathrm{Sp}_{2d-2}(\R)\times\mathrm{Sp}_2(\R) \\
&& + \dim_\R \mathrm{U}(1) \times \mathrm{U}(h^{r,s}-1)\times \prod_{0 \leqslant i \leqslant n, i \neq n-s}\mathrm{U}(h^{n+i+1, n-i}) \\
&& - \dim_\R \prod_{i=0}^n \mathrm{U}(h^{n+i+1,n-i}) \\
& = & d(2d+1) - (3 + (d-1)(2d-1)) + 1 + (h^{r,s}-1)^2 - (h^{r,s})^2 \\
& = & 4d - 2 - 2h^{r,s}. 
\end{eqnarray*}
The identity $d = \sum_{i=0}^{n} h^{n+i+1,n-i}$ then gives the result.
\end{proof}
\begin{proof}[Proof of Theorem \ref{computation}(b)]
Let $\V$, $S$, $\ph$ and $(\G,D)$ be as in the statement. If the image of the period map has dimension $0$ there is nothing to prove. So we assume that $\dim \ph(S^\an) > 0$. By \cite[Prop. 2.13]{platrap}, the derived subgroup $\G^\der = \mathbf{Sp}(V,\psi)$ is $\Q$-simple. By assumption, the weight $2n+1$ is at least $3$ so by Corollary \ref{levelcrit} the Hodge datum $(\G,D)$ has level at least $3$. 

Because $\dim \ph(S^\an) > 0$, Griffiths transversality ensures that the infinitesimal Hodge number $h_{\inf}^1$ is non-zero. This implies that $h^{2n+1-k, k} > 0$ for some $1 \leqslant k \leqslant n$. As $n \geqslant 1$, this gives four distinct non-zero Hodge numbers $h^{2n+1,0} = h^{0,2n+1}$ and $h^{2n+1-k,k} = h^{k, 2n+1-k}$. In particular $\dim_\Q V \geqslant 4$.  By Proposition \ref{hodgedatsymp} we therefore have that for any non-zero Hodge number $h^{r,s}$,
\[
m_\hl(\G,D) \leqslant \Big(2 \sum_{i = 0}^n h^{n-i, n+i+1}\Big) - 2 - h^{r,s}.
\]
Applying Theorem \ref{main} finishes the proof.
\end{proof}
\section{Maximality of the universal variation of non-singular sextic fourfolds}\label{sharp}
Let $n \geqslant 4$ be an even integer, $U_n = \p(H^0(\p^{n+1}(\C), \mathcal{O}(n+2))) - \Delta_n$ the parameter space of smooth hypersurfaces of degree $d = n+2$ in $\p^{n+1}(\C)$. Let $f_n : \mathcal{X}_n \rightarrow U_n$ be the universal family of degree $d$ hypersurfaces in $\p^{n+1}(\C)$, which is explicitly given by: 
\[
f_n : \mathcal{X}_n = \Big\{(x,u) \in \p^{n+1}(\C) \times U_n \hspace{0.1cm} : \hspace{0.1cm} x \in X_u \Big\} \subset \p^{n+1}(\C) \times U_n \twoheadrightarrow U_n. 
\]
Here for $u \in U_n$, we denoted by $X_u$ the associated hypersurface in $\p^{n+1}(\C)$. Let $\V_n = (R^n f_\e \underline{\Z})_\prim/ \mathrm{torsion}$ be the variation on primitive middle cohomology associated to $f_n$. Let $(\G_n, D_n)$ be the generic Hodge datum of $\V_n$, fix $\Gamma_n\subset \G_n(\Q)$ an arithmetic lattice containing the image of the monodromy representation associated to $\V_n$ and $\ph_n : U_n \rightarrow \Gamma_n \quo D_n$ the associated period map. By Picard-Lefschetz's formula, $(\G_n, D_n)$ is a period datum (\cite[Cor. 5.5]{weil}) of othogonal type (because we assumed that $n$ is even). Furthermore, it has non-zero middle Hodge number so that, by Proposition \ref{hodgesuborth}, one has
\[
m_\hl(\G_n, D_n) \leqslant h^{\frac{n}{2}+1, \frac{n}{2}-1} + \cdots + h^{n,0} - 1.
\]
Here we denoted by $(h^{p,q})_{p+q = n}$ the Hodge numbers of the polarized Hodge structure on the primitive part of the middle cohomology of any smooth hypersurface parametrized by $U_n$. Proposition \ref{sharpfourfolds} then follows from the $n = 4$ case of the following computation of the dimension of the image of $\ph_n$:
\begin{proposition}
The image of $\ph_n$ has dimension $h^{n-1,1}$.
\end{proposition}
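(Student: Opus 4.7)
The plan is to compute the rank of the differential $d\ph_n$ at a generic point $u \in U_n$ via Griffiths' description of the infinitesimal period map of a hypersurface in terms of its Jacobian ring, exploiting the fact that $d = n+2$ forces $h^{n,0} = 1$ and makes the top graded piece of the multiplication map trivially an isomorphism.

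More precisely, fix a generic $u = [f] \in U_n$, write $d = n+2$, let $J_f = (\partial f/\partial x_0, \ldots, \partial f/\partial x_{n+1})$ be the Jacobian ideal, and let $R_f = \C[x_0, \ldots, x_{n+1}]/J_f$ be the Jacobian ring. First I identify $T_u U_n \cong H^0(\p^{n+1}, \mathcal{O}(d))/\C \cdot f$. The differential of the $\mathrm{PGL}(n+2)$-action at $u$ sends $A \in \mathfrak{pgl}(n+2)$ to the class of $\sum A_{ij} x_j \partial_i f$; by the Euler relation and the fact that a generic $f$ has trivial stabilizer in $\mathrm{PGL}(n+2)$, this is an injection with image $J_d/\C \cdot f$, where $J_d := (J_f)_d$. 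Passing to the quotient yields the canonical isomorphism $T_u U_n / \mathfrak{pgl}(n+2) \cong R_f^d$.

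Next I use Griffiths' residue description (as recalled e.g. in \cite{carldon}): one has $H^{n-i,i}_{\prim}(X_u) \cong R_f^{(i+1)d - n - 2}$, and the $(n-i,i) \to (n-i-1, i+1)$ component of $d\ph_n$ at $u$ is induced by multiplication $R_f^d \otimes R_f^{(i+1)d-n-2} \to R_f^{(i+2)d-n-2}$ in the Jacobian ring. For the distinguished case $d = n+2$, the values $d-n-2 = 0$ and $2d-n-2 = d$ give $H^{n,0}_{\prim} \cong R_f^0 = \C$ and $H^{n-1,1}_{\prim} \cong R_f^d$, and the $(n,0) \to (n-1,1)$ component becomes the evaluation map $R_f^d \to \mathrm{Hom}(R_f^0, R_f^d)$, which is tautologically an isomorphism. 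Consequently, $d\ph_n$ is injective modulo $\mathfrak{pgl}(n+2)$ at a generic point, so
\[
\ker(d\ph_n)_u = J_d/\C \cdot f \quad \text{and} \quad \dim \mathrm{im}(d\ph_n)_u = \dim R_f^d = h^{n-1,1}.
\]
Since this rank is maximal on a Zariski-open subset of $U_n$ and $\ph_n$ is holomorphic, $\dim \ph_n(U_n^{\an})$ equals this generic rank, giving the claimed equality.

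The main obstacle is marshalling the Griffiths dictionary precisely enough to guarantee that the $(n,0)\to(n-1,1)$ piece of the differential coincides with the multiplication map in $R_f$ under the above identifications, and that the infinitesimal $\mathrm{PGL}$-action surjects onto $J_d$ in $T_u U_n$ (including the Euler relation taking care of the scalar matrices). Once these two identifications are in hand, the collapse $d - n - 2 = 0$ does all the work, and no deep infinitesimal Torelli input is required beyond the fact that $H^{n,0}_{\prim}$ is one-dimensional.
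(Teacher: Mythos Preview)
Your proof is correct and rests on the same decisive observation as the paper's: when $d=n+2$ one has $h^{n,0}=1$, and the $(n,0)\to(n-1,1)$ component of the differential of the period map is tautologically an isomorphism, forcing injectivity of $d\ph_n$ modulo the automorphism directions.

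The packaging, however, is different. You work entirely in the Griffiths residue calculus: identify $T_uU_n/\mathfrak{pgl}(n+2)\cong R_f^d$, note that $d-n-2=0$ collapses $H^{n,0}_{\prim}$ to $R_f^0=\C$, and read off the result from the multiplication map $R_f^d\otimes R_f^0\to R_f^d$. The paper instead factors $d_u\ph_n$ through the Kodaira--Spencer map $\kappa_u:T_uU_n\to H^1(X_u,T_{X_u})$, quotes its surjectivity for $n\geqslant 4$, and then uses the triviality of $K_{X_u}$ (again a consequence of $d=n+2$) to produce an explicit isomorphism $H^1(X_u,T_{X_u})\cong H^1(X_u,\Omega_{X_u}^{n-1})$ which simultaneously shows $\dim H^1(X_u,T_{X_u})=h^{n-1,1}$ and that $\mu_n$ is an isomorphism. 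These are of course two avatars of the same computation: the surjectivity of $\kappa_u$ with kernel the $\mathrm{PGL}$-directions is exactly your identification $T_uU_n/\mathfrak{pgl}\cong R_f^d\cong H^1(X_u,T_{X_u})$, and the trivial canonical bundle is the geometric shadow of $d-n-2=0$. Your algebraic presentation has the virtue of making the role of the Euler relation and the $\mathrm{PGL}$-kernel explicit; the paper's geometric presentation avoids the residue dictionary and appeals directly to the canonical-bundle isomorphism $T_{X_u}\cong\Omega_{X_u}^{n-1}$.
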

\begin{proof}
Fix a smooth point $x \in \ph_n(U_n)$ and some $u \in U_n$ such that $x = \ph_n(u)$. We need to prove that $\dim_\C \im(d_u\ph_n) = h^{n-1,1}$. Recall that by \cite[Thm. 1.23]{grifper2} the differential at $u$ of the period map can be seen as a $\C$-linear map
\[
d_u \ph_n : T_u U_n \rightarrow \bigoplus_p \Hom(H^{n-p}(X_u, \Omega_{X_u}^p)_\prim, H^{n-p+1}(X_u, \Omega_{X_u}^{p-1})_\prim)
\]
and is given explicitly as the composition of the Kodaira-Spencer map at $u$ associated to $f_n$
\[
\kappa_u : T_u U_n \rightarrow H^1(X_u, T_{X_u})
\]
with the direct sum $\mu$ of the maps \[\mu_p : H^1(X_u, T_{X_u}) \rightarrow  \Hom(H^{n-p}(X_u, \Omega_{X_u}^p)_\prim, H^{n-p+1}(X_u, \Omega_{X_u}^{p-1})_\prim)\] induced in cohomology by the interior products $T_{X_u} \otimes \Omega_{X_u}^p \rightarrow \Omega_{X_u}^{p-1}$. By \cite[Lemme 18.15]{voi} the map $\kappa_u$ is surjective since we assumed $n \geqslant 4$ (note that the $n$ in \textit{op. cit.} is our $n+1$). Furthermore, recall the:
\begin{lemma}
The hypersurface $X_u$ has trivial canonical bundle $K_{X_u}$.
\end{lemma}
\begin{proof}
This is well-known consequence of the conormal exact sequence for the inclusion of $X_u$ in $\p^{n+1}(\C)$.
\end{proof}
In particular, the choice of a generator $\Omega \in H^0(X_u, K_{X_u})$ produces an isomorphism $T_{X_u} \cong \Omega_{X_u}^{n-1}$ given by the interior product with $\Omega$. Denote by
\[
[\Omega] : H^1(X_u,T_{X_u}) \cong H^1(X_u, \Omega_{X_u}^{n-1})
\]
the isomorphism induced in cohomology. In particular we have $\dim_\C H^1(X_u, T_{X_u}) = h^{n-1,1}$ and we are left with proving that the map $\mu$ is injective. For this, it suffices to show that its $n$-th component 
\[
\mu_n : H^1(X_u, T_{X_u}) \rightarrow  \Hom(H^{0}(X_u, \Omega_{X_u}^n), H^{1}(X_u, \Omega_{X_u}^{n-1}))
\]
is. But by definition, this map sends a class $\alpha$ to the unique $\C$-linear map sending the generator $\Omega$ to the class $[\Omega](\alpha)$. In particular it is an isomorphism and $\mu$ is therefore injective. This finishes the proof.
\end{proof}
\printbibliography
\bigskip
\noindent
\small{\textsc{I.H.E.S., Université Paris-Saclay, Laboratoire Alexander Grothendieck, 35 Route De Chartres, 91440 Bures-Sur-Yvette (France)}\\
\textit{E-mail :} \texttt{khelifa@ihes.fr}}
\end{document}